\newtheorem{theorem}{Theorem}[section]
\newtheorem{corollary}[theorem]{Corollary}
\newtheorem{lemma}[theorem]{Lemma}
\theoremstyle{definition}
\theoremstyle{remark}
\def\oge{\leavevmode\raise
.3ex\hbox{\(\scriptscriptstyle\langle\!\langle\,\)}}
\def\feg{\leavevmode\raise
.3ex\hbox{\(\scriptscriptstyle\,\rangle\!\rangle\)}}
\newcommand{\nats}{\mathbb{N}}
\newcommand{\ints}{\mathbb{Z}}
\newcommand{\Der}{\mathsf{Der}}
\newcommand{\ev}{\mathsf{ev}}
\newcommand{\nf}{\mathsf{nf}}
\newcommand{\CC}{\mathcal{C}}
\newcommand{\Alg}{\mathsf{Alg}}
\DeclarePairedDelimiter\truncImpl{\lVert}{\rVert}
\DeclareDocumentCommand\trunc{sm}{\truncImpl{#2}\IfBooleanT{#1}{_{\mathsf{S}}}}
\DeclareMathOperator\id{id}
\newcommand\RR{\mathsf{R}}
\DeclareDocumentCommand\Alg{so}{\IfNoValueTF{#2}{\mathsf{Alg}}{#2\textsf{-}\mathsf{Alg}}\IfBooleanT{#1}{_{fp}}}
\tikzset{weq/.style = {"\sim"'{sloped,font=\tiny,#1}}}
\begin{document}

\title{A Note About Models of Synthetic Algebraic Geometry}

\author{Thierry Coquand, Jonas H\"ofer and Christian Sattler}
\date{}
\maketitle


\section*{Introduction}

We show how to build models of the system presented in \cite{draft}
for special base commutative ring.
At the same time, we report a mistake,
and a way to fix this mistake in important cases, in the construction of a model of the axiom system  presented in~\cite{draft}.
This mistake was found by the second author, and a solution for the general case will be presented in forthcoming work.

Interestingly, the problem involves the notion of {\em set quotient} in the underlying metatheory, and how it compares with quotient in models
of homotopy type theory. This is an issue important in the study of
{\em constructive} models of type theory with univalence \cite{shulman21}. And indeed, the solution we present in a special case has the crucial feature that it does {\em not} require set quotients in the metatheory (and this will also holds for the general case).

In the paper \cite{draft}, the justification of the axiom system involved first a construction of a {\em presheaf model} satisfying a system of
4 axioms. This was checked only in the {\em $1$-topos} case, arguing that, since the objects involved are all strict/constant presheaves, this justification would
lift to the case of the {\em cubical} presheaf model. As we explain in the first section, this is however {\em not} correct, essentially because the set
quotient required for building finitely presented $k$-algebras does not behave as it should as homotopy (h)set quotient. This lifting from sets to cubical sets
{\em does} work in some cases, e.g.
if the base ring is a discrete field \cite{mines} or $\ints$. Even in these cases however, one has further to introduce a new idea, which is to
{\em relativise} the presheaf model using the cobar modality introduced in \cite{CRS21}.

\section{Description of the problem}

Let $k$ be a fixed ring and $\CC$ the opposite of the category
of finitely presented $k$-algebras. 
We write $L,M,N,\dots$ for such objects and $f,g,h,\dots$ for the $k$-algebra morphisms.
A cubical presheaf $F$ on this category $\CC$
can be seen as a family of cubical sets $F(L)$ with restriction maps $F(L)\rightarrow F(M),~u\mapsto fu$ for $f:L\rightarrow M$ satistying the
{\em strict} equalities $\id~u= u$ and $(g\circ f)~u = g(f~u)$. A natural transformation $u:F\rightarrow G$ is given by a family of
cubical set maps $u_L:F(L)\rightarrow G(L)$ satisfying {\em strict} commutation $f(u_La) = u_M(f a)$ for $f:L\rightarrow M$ and $a:F(L)$\footnote{This is an internal statement in the presheaf of cubical sets, involving a quantification on the cubical set $F(L)$.}.
If $E$ is a set, we write $\Delta(E)$ the associated constant cubical set.
We can see $\Delta$ as a functor from sets to cubical sets, and, if $F$ is a {\em set valued} presheaf, consider $\Delta\circ F$ which is now
a cubical presheaf.
The model described in \cite{draft} is the internal model of fibrant cubical presheaves \cite{draft,CRS21} over $\CC$.

We define $\RR$ by taking $\RR(M)$ to be $\Delta(M)$. This defines (internally) a ring, and the claim in~\cite{draft}
was that this ring satisfies suitable axioms, involving finitely presented $\RR$-algebras.

The problem is in the description of these
finitely presented $\RR$-algebras. It turns out that this problem can already be seen in the simple case $P = \RR[X]$.
We are  going to explain this case in detail.

In \cite{draft}, it was assumed implicitely that we could define $P$ simply as $P(L) = \Delta (L[X])$. This defines a presheaf,
which is a $\RR$-algebra. 
The problem is that this
$\RR$-algebra is {\em not} $\RR[X]$ in general for the homotopical semantics, i.e.
it does not satisfy the required universal property of $\RR[X]$ in the presheaf model.
Indeed, for this universal property, we should be able, in particular, given a cubical presheaf ring
$A$ with a ring morphism $u:\RR\rightarrow A$ and a global element $a:A$, to extend
it to a map $v:P\rightarrow A$. This would require
\begin{enumerate}
\item a {\em levelwise} problem, to define for each $L$ a map $v_L:\Delta (P(L))\rightarrow A(L)$
\item a {\em strict coherence} problem, to check that these maps commute {\em strictly} with restrictions, i.e. $v_M(f~p) = f v_L(p)$, as a
  strict equality, for any $p$ in $P(L)$ and $f:L\rightarrow M$.
\end{enumerate}
It turns out that there are issues in general with {\em both} requirements.

\subsection{Levelwise Problem}


One way to understand this problem is the following. The algebra $L[X]$ in {\em sets}, can be seen as the set quotient
of the free term algebra $E$, with ring operations on a variable $X$ and constants $l$ for each element $l$ of $L$, by the
equational theory of rings. We
can define the evaluation map $E\rightarrow A(L)$ given $u_L$ and $a_L$. {\em If} the ring equations hold strictly in $A(L)$,
this map will factor through the quotient map $\Delta(L[X])\rightarrow A(L)$. However, in general, these laws hold only
as path equality and there is no way to define such a factorization.

This problem can be  summarized informally by stating that we may not have the equality
\[
\Delta(L[X]) = \Delta(L)[X]
\]
in general if $L$ is a set theoretic ring, and $L[X]$ the polynomial ring in sets.

This issue does not appear however if the equality on $L$ is {\em decidable}. In this case, we can choose a unique way to write an element of
$L[X]$, e.g. as $l_0+\dots + l_nX^n$ with $l_n\neq 0$, and it is now possible to define the universal map $u_L$ without ambiguity, as 
a factorization of the evaluation map $E\rightarrow A(L)$.

\subsection{Strict Coherence Problem}


The second issue is now a strict coherence problem. Even in the decidable case,
we only can expect the equality $u_M(f~p) = f u_L(p)$ to hold as a {\em path} equality (and not as
a strict equality).

\section{A solution if $k$ is a finitely presented algebra over $\ints$ or a discrete field}

\subsection{The cobar modality}

In \cite{CRS21}, we introduce a left exact modality \cite{modalities}
$D$ (which can be defined on any presheaf model), the {\em cobar} modality. We write $\eta_F:F \rightarrow DF$
the unit of this modality. The following Lemma is a consequence of the properties proved in \cite{CRS21}.

\begin{lemma}
  \begin{enumerate}
   \item Given two presheaves $F$ and $G$, and a family of maps $v_L:F(L)\rightarrow G(L)$ with {\em path} commutation w.r.t. restriction, {\em if} each $G(L)$
is a (h)set, then there exists a {\em strict} natural transformation $w:F\rightarrow DG$ such that $\eta_F\circ v_L$ is path equal to $w_L$.
    \item If $F$ is a set valued presheaf, then the associated cubical presheaf $\Delta \circ F$ is cobar modal.
  \end{enumerate}
\end{lemma}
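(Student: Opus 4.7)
The approach is to deduce both items from general properties of the cobar modality $D$ established in \cite{CRS21}, rather than constructing anything from scratch.

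For item~1, the idea is that $D$ is designed precisely to strictify pseudo-natural data when the target is hset-valued. I would first compose with the unit $\eta_G$ to obtain the family $\tilde v_L \mathrel{:=} \eta_G \circ v_L : F(L) \to DG(L)$, which is still only path-natural with respect to restrictions. Then I would invoke the strictification property of $D$ from \cite{CRS21}: $D$ arises from a (co)simplicial resolution whose higher components parametrise coherence obligations, and when the target is hset-valued those higher coherences are automatic because any parallel pair of paths in an hset agrees. This yields a strict natural transformation $w : F \to DG$, and by construction $w_L$ is path-equal to $\eta_G \circ v_L$.

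For item~2, I would show that $\eta_{\Delta F}$ is a pointwise equivalence. Since $(\Delta \circ F)(L) = \Delta(F(L))$ is a constant cubical set on an ordinary set, applying the cubical cobar construction to a constant object returns (up to equivalence) the object itself: every coherence obligation in the defining resolution is already trivially satisfied by constancy. Hence $\Delta \circ F$ is automatically $D$-modal.

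The principal obstacle is item~1. The subtle point is that we need genuinely \emph{strict} naturality, not merely naturality up to path — this is exactly what the cobar modality is supposed to buy us. Making the argument rigorous means invoking the precise strictification statement of \cite{CRS21}: the resolution defining $D$ collapses all higher coherence obligations when the target is hset-valued, and this collapse is compatible with the unit $\eta_G$ so that the resulting strict $w_L$ is indeed path-equal to $\eta_G \circ v_L$. Item~2 is then essentially formal and should fit in a few lines.
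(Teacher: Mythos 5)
Your proposal is correct and matches the paper's treatment: the paper gives no separate proof of this Lemma, stating only that it is a consequence of the properties of the cobar modality established in \cite{CRS21}, which is exactly what you invoke (strictification of path-natural families into $DG$ with hset-valuedness killing the higher coherences, and discreteness of $\Delta\circ F$ making the unit an equivalence). Your added sketch is consistent with the intended mechanism, so there is nothing to correct.
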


This suggests that a solution (in the case where each $L$ has a decidable equality) should be to work with the relativised model of
{\em cobar modal} presheaves.

\subsection{A model}

We build a model of the axioms 1-4 discussed in \cite{draft} in the case where $k$
is a finitely presented algebra over $\ints$ or a discrete field, using the cobar relativisation of the presheaf models over $\CC$
as a underlying model of type theory.
We use the same interpretation of $\RR$ as in \cite{draft}, namely defined by $\RR(L) = \Delta(L)$, which is already cobar modal.

In order to do this, we use a general Lemma\footnote{The levelwise problem can conceptually be formulated at the level of the left exact
completion of the category of sets, where objects are pairs $(X,R)$, and $R$ being a ``proof-relevant'' equivalence relation on the set $X$.
In general, if $x_0\sim x_1$ means that the set $R(x_0,x_1)$ is inhabited, then the pair $(X,R)$ is not equivalent to the pair $(X/\sim,=)$.
This is closely connected to the discussion in the introduction of \cite{shulman21}.}
about models of equational theories in cubical sets. Let $T$
be a (first-order) equational theory. Let $E$ be the set of closed terms, and, for $t,u$ in $E$, let $\Der(t,u)$ be the set
of equational proofs of equality of $t$ and $u$ in the theory $T$. The initial term model, in {\em sets}, is the quotient of $E$
by the equivalence relation $t\sim u$, which states that there exists an element in $\Der(t,u)$.


\begin{lemma}\label{norm}
  If there is a normal form function\footnote{This means that $\nf$ is an idempotent function on the set $E$ such that
  $t\sim u$ if, and only if, $\nf(t) = \nf(u)$.} $\nf:E\rightarrow E$ and a section\footnote{Given this function, we get a choice
  function which, from $t\sim u$, extracts an element of $\Der(t,u).$} $\Pi_{t:E}\Der(t,\nf(t))$ then the constant cubical
  set $\Delta(E/\sim)$ is the initial (h)set term model of $T$ in the cubical model.
\end{lemma}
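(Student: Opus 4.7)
The plan is to verify, internally in the cubical presheaf model, the initiality universal property: for every hset $T$-algebra $A$, the type of $T$-algebra morphisms $\Delta(E/\sim) \to A$ is contractible. Note first that $\Delta(E/\sim)$ is an hset (a constant cubical set on a set) and inherits a strict $T$-algebra structure from the set-theoretic initial term model $E/\sim$ via the finite-product-preserving functor $\Delta$. For existence, fix $A$ and define the evaluation $\ev_L : E \to A(L)$ by induction on closed terms, using the interpretation of the operations of $T$ in $A$; since these operations are strict natural transformations and $t \in E$ is closed, the assignment $L \mapsto \ev_L(t)$ is strictly natural in $L$. Now set $v_L([t]) := \ev_L(\nf(t))$. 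This is well defined on the set quotient since $[t] = [u]$ iff $\nf(t) = \nf(u)$, and it is strictly natural in $L$ because $\nf(t)$ is again a closed term, so $v$ is a genuine natural transformation $\Delta(E/\sim) \to A$.

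To see $v$ is a $T$-algebra morphism, I would check, up to path in $A(L)$, that for every operation $\omega$ of $T$:
\[
\ev_L(\nf(\omega(t_1,\dots,t_n))) = \omega_{A(L)}(\ev_L(\nf(t_1)),\dots,\ev_L(\nf(t_n))).
\]
The right-hand side equals $\ev_L(\omega(\nf(t_1),\dots,\nf(t_n)))$ strictly, and the two closed terms $\nf(\omega(t_1,\dots,t_n))$ and $\omega(\nf(t_1),\dots,\nf(t_n))$ share a common normal form, so the section $\Pi_{t:E}\Der(t,\nf(t))$ furnishes (via a symmetry) an explicit derivation between them. Interpreting this syntactic derivation in $A$, where every axiom of $T$ is equipped with a chosen path, yields the required equality; since $A(L)$ is an hset, a path suffices. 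For uniqueness, given another $T$-algebra morphism $w : \Delta(E/\sim) \to A$, one shows $w_L([t]) = \ev_L(t)$ by induction on the closed term $t$: constants are handled because $w$ preserves $0$-ary operations, and compound terms by the inductive hypothesis together with $w$ preserving operations up to path. Similarly $v_L([t]) = \ev_L(\nf(t)) = \ev_L(t)$, using the section once more, so $v_L = w_L$ pointwise and hence $v = w$ by function extensionality.

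The crux of the proof — and the reason both hypotheses of the lemma are needed — is the step turning derivations into paths in $A$. Without the section, $t \sim \nf(t)$ would only hold as a mere proposition and we could not exhibit the explicit paths witnessing that $v$ preserves operations. This is exactly the levelwise problem described earlier: the normal form makes $v$ strictly well defined on the set quotient, while the section converts the proof-relevant relation $\Der$ into explicit computable data that can be interpreted in any cubical $T$-algebra, so that the set quotient and the homotopy set quotient coincide in the constant cubical setting.
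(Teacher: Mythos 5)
Your proof is correct and follows essentially the same route as the paper: interpret closed terms by an evaluation map into the hset algebra $A$, turn equational derivations into paths, and use the section $\Pi_{t:E}\Der(t,\nf(t))$ to relate $\ev(t)$ with $\ev(\nf(t))$ so that the map factors through the normal forms. You simply spell out more detail than the paper does (the homomorphism property and uniqueness), while the paper's only additional remark is that $E/\sim$ can be realised as the subset $N$ of normal forms, so no metatheoretic set quotient is needed.
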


\begin{proof}
Notice that, in this case, we don't need quotient
in the set theory to form $E/\sim$ since it can be built as the subset $N$ of $E$ of terms in normal forms, with a function
$\nf:E\rightarrow N$. If $A$ is a (h)set model of the theory $T$ there is an evaluation function $\ev:\Delta(E)\rightarrow T$, which
satisfies $\ev(t) =_A \ev(u)$ as a {\em path equality} given an element of $\Der(t,u)$.
This gives the initial map $i:\Delta(N)\rightarrow A$ which satisfies $i\circ \Delta(\nf) = ev$ as a path equality given the section
$\Der(t,\nf(t))$.
\end{proof}

To have a section $\Pi_{t:E}\Der(t,\nf(t))$ can be seen as a constructive requirement: we not only should have $t\sim \nf(t)$ but also
an explicit derivation $\Der(t,\nf(t))$ in order to produce a path equality between $\ev(t)$ and $\ev(\nf(t))$ in $A$.


\begin{corollary}\label{quot}
  If $L$ is a finitely presented algebra over a discrete field or $\ints$ then
  \[
  \Delta (L[X_1,\dots,X_n]/(q_1,\dots,q_m))
  \]
  is, in the model of cubical sets, the (h)set quotient
  of $(\Delta L)[X_1,\dots,X_n]$ by $(q_1,\dots,q_m).$
\end{corollary}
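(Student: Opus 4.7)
The strategy is to reduce directly to Lemma~\ref{norm} by choosing an appropriate equational theory $T$. Let $T$ extend the signature of commutative rings by a constant $c_l$ for each $l\in L$ and by further constants $X_1,\dots,X_n$; axiomatize $T$ by the ring axioms, the addition/multiplication tables of $L$ (i.e.\ $c_l+c_{l'}=c_{l+l'}$, $c_l c_{l'}=c_{ll'}$, $c_0=0$, $c_1=1$), together with the relations $q_j(c,X_1,\dots,X_n)=0$ for $j=1,\dots,m$. The initial set-theoretic $T$-model is $R := L[X_1,\dots,X_n]/(q_1,\dots,q_m)$, and by the universal property of polynomial rings and set quotients, interpreted in cubical sets, the initial (h)set $T$-model in the cubical model is exactly the (h)set quotient of $(\Delta L)[X_1,\dots,X_n]$ by $(q_1,\dots,q_m)$. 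Hence it suffices, by Lemma~\ref{norm}, to exhibit a normal form function $\nf\colon E\to E$ together with a section $\prod_{t:E}\Der(t,\nf(t))$.

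To construct $\nf$, observe that $R$ is itself finitely presented over $\ints$ or over a discrete field $k$: if $L\cong k[Y_1,\dots,Y_p]/(r_1,\dots,r_s)$, then $R\cong k[Y_1,\dots,Y_p,X_1,\dots,X_n]/(r_1,\dots,r_s,q_1,\dots,q_m)$, and analogously over $\ints$. Constructive Gr\"obner basis theory applies in both cases --- standard for discrete fields, and via strong Gr\"obner bases over a Euclidean domain for $\ints$. Fix a monomial order, compute a Gr\"obner basis $G$ of the defining ideal, and set $\nf(t)$ to be the polynomial obtained by multivariate division of $t$ (read as a polynomial in the generators) by $G$, lifted back to a closed term of $T$ using the constants $c_{(\cdot)}$ and the $X_i$. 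Idempotence of $\nf$ and the equivalence $t\sim u \iff \nf(t)=\nf(u)$ are precisely the defining properties of Gr\"obner bases.

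For the section, the essential point is that multivariate division is \emph{explicit}: each reduction step $t\rightsquigarrow t'$ subtracts a monomial multiple of some $g\in G$ and rewrites the result using the ring axioms, and this step is witnessed by a concrete derivation in $T$. Concatenating these step-derivations yields an element of $\Der(t,\nf(t))$ depending functorially on $t$, giving the required section. The main obstacle is exactly this bookkeeping: one has to check that every elementary move in the division algorithm unfolds into a composition of ring-axiom rewrites and instances of the equations $r_i=0$ and $q_j=0$, so that the produced proof genuinely lives in the equational-proof set $\Der$ for the theory $T$ --- and, in the $\ints$-case, that the step-witnesses produced by strong Gr\"obner reduction (which involves coefficient reductions via the Euclidean algorithm) can themselves be traced back to ring-axiom derivations over $\ints$. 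Once this is done, Lemma~\ref{norm} immediately yields the stated identification of $\Delta R$ with the h-set quotient of $(\Delta L)[X_1,\dots,X_n]$ by $(q_1,\dots,q_m)$.
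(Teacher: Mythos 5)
Your proof takes essentially the same route as the paper: the paper's own argument is exactly the one-line observation that Gr\"obner bases (over a discrete field, or over $\ints$ as in \cite{yengui2006}), with a constructive proof of correctness, yield a normal form function together with an explicit section $\Pi_{t:E}\Der(t,\nf(t))$, so that Lemma~\ref{norm} applies. The extra material you supply --- the choice of the equational theory $T$, the identification of its initial (h)set model with the (h)set quotient of $(\Delta L)[X_1,\dots,X_n]$, and the unfolding of the division steps into equational derivations --- is precisely the bookkeeping the paper leaves implicit, and it is handled correctly.
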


\begin{proof}
  We can use Gr\"obner bases \cite{yengui2006}, with a constructive proof of correctness, to get a suitable normal form function
  and then apply the previous Lemma.
\end{proof}

\newcommand{\FP}{\mathsf{FP}}
\newcommand{\UU}{\mathcal{U}}

Using this, we can follow the argument in \cite{draft}, Section 8.1.2. We now work in the presheaf
model over $\CC$, relativised by the cobar modality.
We use the presheaf of ``finite presentations'' which is internally the type
\(\Sigma_{n:\mathbb{N}}\Sigma_{m:\mathbb{N}}\RR[X_{1},\ldots,X_{n}]^{m}\) and externally the type
\[
\FP(L) = \Sigma_{n:\nats}\Sigma_{m:\nats}\Delta (L[X_1,\dots,X_n]^m).
\]
(note that this type is cobar modal).
We can now define the 
dependent presheaf which, to any presentation
$\xi = (n,m,q_1,\dots,q_m)$, associates the $\RR$-algebra $A(\xi) = \RR[X_1,\dots,X_n]/(q_1,\dots,q_m)$.
Externally, we define it as the presheaf
\[
A(L,\xi) = \Delta (L[X_1,\dots,X_n]/(q_1,\dots,q_m)) = (\Delta L)[X_1,\dots,X_n]/(q_1,\dots,q_m)
\]
The last equality follows from Corollary \ref{quot}, and {\em if} we use the model of cobar modal presheaves, this is the correct
external description of $A$. We can then justify axioms 2-4 as in \cite{draft}.

\section{Another example}

The technique presented in this paper applies in the case of {\em finitely presented distributive lattices} instead of
finitely presented $k$-algebra, since we can apply Lemma \ref{norm} in this case. This should provide a model
of the theory presented in \cite{gratzer2024} in a constructive meta theory. Note that the cobar modality is also used
in a crucial way in the work \cite{weaver20}.


\begin{thebibliography}{1}

\bibitem{draft}
Felix {Cherubini}, Thierry {Coquand}, and Matthias {Hutzler}.
\newblock A foundation for synthetic algebraic geometry.
\newblock {\em Mathematical Structures in Computer Science}, 34(9):1008--1053,
  2024.

\bibitem{CRS21}
Thierry Coquand, Fabian Ruch, and Christian Sattler.
\newblock Constructive sheaf models of type theory.
\newblock {\em Math. Struct. Comput. Sci.}, 31(9):979--1002, 2021.

\bibitem{gratzer2024}
Daniel Gratzer, Jonathan Weinberger, and Ulrik Buchholtz.
\newblock Directed univalence in simplicial homotopy type theory, 2024.

\bibitem{mines}
Ray Mines, Fred Richman, and Wim Ruitenburg.
\newblock {\em A Course in Constructive Algebra}.
\newblock Universitext. Springer New York, 1988.

\bibitem{modalities}
Egbert Rijke, Michael Shulman, and Bas Spitters.
\newblock {Modalities in homotopy type theory}.
\newblock {\em {Logical Methods in Computer Science}}, {Volume 16, Issue 1},
  January 2020.

\bibitem{shulman21}
Michael Shulman.
\newblock The derivator of setoids.
\newblock {\em Cahiers de topologie et géométrie différentielle
  catégoriques}, LXIV:29--96, 2021.

\bibitem{weaver20}
Matthew~Z. Weaver and Daniel~R. Licata.
\newblock A constructive model of directed univalence in bicubical sets.
\newblock In {\em Proceedings of the 35th Annual ACM/IEEE Symposium on Logic in
  Computer Science}, LICS '20, page 915–928, New York, NY, USA, 2020.
  Association for Computing Machinery.

\bibitem{yengui2006}
Ihsen Yengui.
\newblock {A dynamical solution of Kronecker's problem}.
\newblock In Thierry Coquand, Henri Lombardi, and Marie-Fran\c{c}oise Roy,
  editors, {\em Mathematics, Algorithms, Proofs}, volume 5021 of {\em Dagstuhl
  Seminar Proceedings (DagSemProc)}, pages 1--9, Dagstuhl, Germany, 2006.
  Schloss Dagstuhl -- Leibniz-Zentrum f{\"u}r Informatik.

\end{thebibliography}
\end{document}